\newcommand{\C}{\mathbb{C}}
\newcommand{\Z}{\mathbb{Z}}
\newcommand{\mK}{\mathcal{K}}
\newcommand{\mN}{\mathcal{N}}
\newcommand{\mH}{\mathcal{H}}
\newcommand{\mW}{\mathcal{W}}
\newcommand{\mV}{\mathcal{V}}
\newcommand{\mF}{\mathcal{F}}
\newcommand{\PhiK}{\Phi_{\!_\mK}}
\newcommand{\mWK}{\mathcal{W}_{\!_\mK}}
\newcommand{\mChiK}{\chi_{\!_\mK}}
\newtheorem{theorem}{Theorem}[section]
\newtheorem{corollary}[theorem]{Corollary}
\newtheorem{proposition}[theorem]{Proposition}
\DeclareMathOperator{\Span}{span}   
\DeclareMathOperator{\supp}{supp} 
\DeclareMathOperator{\Tr}{Tr}
\DeclareMathOperator{\card}{card}
\theoremstyle{remark}
\newtheorem{remark}[theorem]{Remark}
\newtheorem{example}[theorem]{Example}
\theoremstyle{definition}
\newtheorem{definition}[theorem]{Definition}
\setlist[enumerate,1]{label={\upshape(\roman*)}}
\title{Gabor fusion frames generated by difference sets}
\author{Irena Bojarovska and Victoria Paternostro}
\subjclass[2010]{42C15, 42A38, 94A12, 65T50.}
\keywords{Gabor systems, time-frequency analysis, fusion frames, mutual coherence, Welch bound.}
\begin{document}

\address{\textrm{(I. Bojarovska)}
Technische Universit\"{a}t Berlin,  Berlin, Germany;}
\email{bojarovska@math.tu-berlin.de}

\address{\textrm{(V. Paternostro)}
Departamento de Matem\'atica, Facultad de Ciencias Exactas y Naturales, Universidad  de Buenos Aires, Argentina and IMAS-CONICET, Consejo  Nacional de Investigaciones Cient\'ificas y T\'ecnicas, Argentina}
\email{vpater@dm.uba.ar}

 \begin{abstract}
Collections of time- and frequency-shifts of suitably chosen generators (Alltop  or random vectors) proved successful for many applications in sparse recovery and related fields.
It was shown in \cite{xia2005achieving} that taking a characteristic function of a difference set as a generator, and considering only the frequency shifts, gives an equaingular tight frame for the subspace they span. In this paper, we investigate the system of all $N^2$ time- and frequency-shifts of a difference set in dimension $N$ via the mutual coherence,  and compare numerically its sparse recovery effectiveness with Alltop and random generators.
We further view this Gabor system as a fusion frame, show that it is optimally sparse, and moreover an equidistant tight fusion frame, i.e. it is an optimal Grassmannian packing.
 \end{abstract}
\maketitle
\begin{section}{Introduction}

Gabor frames \cite{pfander2013gabor}, which are collections of time- and frequency- shifts (translations and modulations) of a chosen generator, have been shown useful for a variety of applications in signal processing related to signals sparse in a Gabor system, for example, model selection (also called sparsity pattern recovery) \cite{bajwa2010gabor}, and channel estimation and identification \cite{pfander2008identification}.  
A crucial property of a Gabor system is that for any unitnorm nonzero generator $v\in \C^N,$ it constitutes an $N$-tight frame \cite{lawrence2005linear, pfander2013gabor}. In the aforementioned applications, two main generators have shown to be particularly useful from both a theoretical and practical point of view: Alltop  and random vectors (see e.g. \cite{pfander2013gabor}). The theoretical guarantees come from the near optimal coherence properties of the Gabor frames generated by these vectors.

The problem of finding frames with optimal (in)coherence (in the sense of achieving the Welch bound, or equivalently being equiangular tight frame (ETF)) is of great importance for signal processing applications, as well as for other areas of mathematics. One example is coding theory, where one looks for maximum-Welch-bound-equality (MWBE) codebooks \cite{xia2005achieving}. Another example is line packing in Grassmannian manifolds, where one seeks $N$ lines in the $K$-dimensional space so that the maximum chordal distance between any two lines is minimized \cite{conway1996packing}. These equivalent problems are very difficult and analytic constructions are very limited, known to date only for certain parameters of $N$ and $K$ (see \cite{fickus2015tables} for a comprehensive overview of known results). One example of such constructions comes from combinatorial design theory \cite{dinitz1992contemporary}: 
if we take the so-called \textit{difference sets}, which are subsets of $\{ 0, 1, \ldots, N-1 \}$ with $K$ elements with certain properties (see Definition \ref{def:diff-sets}), 
and choose $K$ rows of the discrete Fourier transform matrix indexed 
according to the elements of this set -- as it was shown in \cite{xia2005achieving} -- we will obtain an ETF of $N$ vectors in dimension $K.$ For other large families of ETF inspired from design theory see \cite{fickus2012steiner, jasper2014kirkman}.

In the first part of the paper, we will view the characteristic function of a different set as an element of $\C^N$ and investigate the following question: what type of coherence properties does the full system of modulations and translations, generated by a difference set exhibit? Here the corresponding optimal packing problem is to pack $N^2$ lines in $N$-dimensional space. 
Although for some difference sets the mutual coherence can be asymptotically small, we will show that achieving the Welch bound for full Gabor frames generated by the characteristic function of difference sets is not possible. However, in the light of compressed sensing, our numerical results will show that the Gabor measurements generated by some known difference sets are suitable for recovering  sparse signals, and have a recovery rate of the order of the Gabor measurements generated by random or Alltop vectors.

Having a redundant system instead of a basis for representation of the signal does often prove helpful in order to gain stability, robustness against noise and erasures etc. The search for representations which suit the different needs in signal processing led to the development of the powerful concept of fusion frames \cite{casazza2004frames, casazza2008fusion}, which are frame-like collections of subspaces in a Hilbert space. As in the classical frame setting, in this framework one is also interested in constructing fusion frames with prescribed optimality properties, which include analogous coherence and equiangularity conditions.

Thus, in the second part of the paper, we view the aforementioned Gabor system as a fusion frame (we consider the subspaces of $\C^N$ spanned by the modulations of a fixed translation of the  generator). We will prove that having a difference set as a generator actually allows us to obtain the following desirable optimality properties: our fusion frame is tight, optimally sparse, and equidistant. 

At the end, we solve numerically the problem of recovering sparse signals from Gabor frame and Gabor fusion frame measurements, generated by difference sets. The sparsity is understood differently in each of the cases: in the first case, the signal is a linear combination of a small number of modulations and translations of the generator. For Alltop and random generators this problem was considered in \cite{bajwa2010gabor,pfander2008identification}. The second case is fusion sparsity, when the signal which lies in a union of few subspaces (in our case, involving only few translations) is measured with a measurement matrix and one would like to recover it using a mixed $\ell_2\slash \ell_1$ minimization problem \cite{ayaz2014uniform,ayaz2013sparse,boufounos2011sparse}.

The paper is organized as follows. In Section \ref{sec:gabor}, after introducing the main objects and their basic properties, we start with the study of the mutual coherence of the Gabor frame generated by a characteristic function of a difference set. We provide a formula which depends on the parameters of the difference sets and study the question of achieving the Welch bound.
In Section \ref{sec:ff} we switch to the properties of the Gabor-like fusion frame, generated by a difference set. We prove three important properties of this construction: tightness, equidistance, and optimal sparsity. 
Finally, in Section \ref{sec:num}, we provide numerical experiments to demonstrate the effectiveness of both proposed constructions in solving the sparse recovery problem. 
\end{section}

\begin{section}{Gabor systems generated by difference sets} \label{sec:gabor}

\begin{subsection}{Background}
Let us start with a formal definition of a frame. For more details on the theory of finite frames, see \cite{casazza2012finite}.

\begin{definition}
A family of nonzero  vectors $\Phi=\{ \phi_i \}_{i=1}^M$ in  $\C^N$ is called a \textit{finite frame}  for $\C^N,$ if there exist
constants $0 < A \leq B < \infty$ such that 
\begin{equation} \label{eq:deff}
 A \Vert x \Vert^2 \leq \sum_{i=1}^M \vert \langle x,\phi_i \rangle \vert^2 \leq B \Vert x \Vert^2 \quad \text{for all } x \in \C^N.
\end{equation}
\end{definition}
If $A=B$ is possible in \eqref{eq:deff}, then $\Phi=\{ \phi_i \}_{i=1}^M$  is called an $A$-\textit{tight frame}, and if additionally $A=B=1$ is possible, then it is a \textit{Parseval frame}.
If there exist a constant $c$ such that $\Vert \phi_i \Vert = c$ for all $i=1,2,\ldots,M,$ then $\Phi=\{ \phi_i \}_{i=1}^M$  is an 
\textit{equal norm frame}. If $c=1,$ $\Phi=\{ \phi_i \}_{i=1}^M$  is a \textit{unit norm frame}.
If there exists a constant $c$ such that $\vert \langle \phi_i, \phi_j \rangle \vert = c$ for all $i \neq j,$ then $\Phi=\{ \phi_i \}_{i=1}^M$  is called an \textit{equiangular frame}.

An important quantity that measures the relations between the frame elements is the \textit{mutual coherence}, defined for a frame $\Phi = \{ \phi_i \}_{i=1}^M$ as
\begin{equation} \label{eq:mutcoh}
\mu(\Phi) = \max_{i\neq j} \frac{\vert \langle \phi_i, \phi_j \rangle \vert}{\Vert \phi_i \Vert \Vert \phi_j \Vert}.
\end{equation}
On the one hand, it is clear that when $M=N$ and $\Phi$ is an orthonormal basis, we will have $\mu(\Phi)=0.$ On the other hand, if there exist two linearly dependent vectors in the system, the mutual coherence will be $\mu(\Phi)=1.$ There is a minimal value that $\mu$ can have for a general redundant frame, and it is given by the so-called \textit{Welch bound} \cite{welch1974lower},
\begin{equation} \label{eq:wb}
\mu(\Phi) \geq \sqrt{\frac{M-N}{N(M-1)}}.
\end{equation}
One interesting and very useful property of the Welch bound is that equality in \eqref{eq:wb} is achieved if and only is  $\Phi$ is an equiangular tight frame (ETF) (see \cite[Theorem 2.3]{strohmer2003grassmannian}). These type of frames are in some sense the best (optimally incoherent) redundant frames,  but at the same time highly difficult to construct. Furthermore, the most common guarantees that one is able to reconstruct sparse vectors from linear measurements (to be explained in more details in the last section), 
rely on having small mutual coherence \cite{pfander2008identification}.

We now proceed to the construction of a particular type of frames, which is build from time-frequency shifts of a given  vector. 
\begin{definition}
A \textit{Gabor system} generated by a  window $g \in \C^N$ is the collection
\begin{equation*} 
\Phi_g = \{ M_jT_k g\}_{j,k=0}^{N-1},
\end{equation*}
where $M_jg(n) = e^{\frac{2\pi i jn}{N}} g(n) $ is the modulation (or frequency-shift) operator and $T_k g(n) = g(n-k)$ for all $n=0,\ldots,N-1$ is the translation (or time-shift) operator. We emphasize that all the operations made in the index set $\{0, \ldots, N-1\}$ are in the sense of the group $\Z_N$, that is, modulo $N$.  
\end{definition}
For every $g\neq 0,$ the Gabor system is actually an $N \Vert g \Vert^2$-tight frame \cite{pfander2013gabor}. 
The coherence of this frame, however, depends strongly on the properties of the generator. It is therefore
of interest for many applications to search for ``optimal'' generators.
\end{subsection}

 \begin{subsection}{Definition and coherence properties} 
 We will now be investigating the Gabor frame (and later its generalization to a fusion frame), which is generated by a characteristic function of a difference set, 
 a construction coming from combinatorial design theory \cite{dinitz1992contemporary}, and defined in details below.
 \begin{definition}\label{def:diff-sets}
A subset $\mK = \{u_1,\ldots,u_K\}$ of $\Z_N$ is called an $(N,K,\lambda)$ \textit{difference set}, if the $K(K-1)$ differences
$$(u_k -u_l) \mod N, \quad k \neq l,$$
take all possible nonzero values $1,2,\ldots,N-1,$ with each value exactly $\lambda$ times.
\end{definition}
\begin{example} \label{ex:dfs}
Let $N=7.$ The subset $\mK = \{ 1,2,4\}$ is then a $(7,3,1)$ difference set. We can check this by considering all
possible differences modulo $7$, displayed in the following diagram:
$$ \begin{tabular}{c|ccc}
 - & 1 & 2 & 4  \\ \hline
 1 & - & 6 & 4  \\
 2 & 1 & - & 5  \\
 4 & 3 & 2 & -  
\end{tabular} $$
This confirms that indeed every value from $1$ to $6$ appears exactly one time. For many other examples of difference sets, parameters for which they do not exist and open questions, see the La Jolla Difference Set Repository  \url{http://www.ccrwest.org/ds.html}.
\end{example}
Given  a difference set  $\mK$  with parameters $(N,K,\lambda)$ we denote by $\mChiK \in \C^N$ its
characteristic function:
$$ \mChiK(j) = \begin{cases}
      \hfill 1,    \hfill & \text{if } j \in \mK, \\
      \hfill 0,   \hfill & \text{if } j \notin \mK.\\
      \end{cases}
$$
By $\widehat{\mChiK}$ we denote the Fourier transform of $\mChiK$, which for a general window $g\in \C^N$ is defined as $\widehat{g}(j)=\sum_{k=0}^{N-1}g(k)e^{\frac{-2\pi i kj}{N}}$, for $j=0, \ldots, N-1$. 

We next note some basic, but important properties of difference sets.
\begin{proposition}\label{properties-K}
Let $\mK$ be a difference set with parameters $(N,K,\lambda).$ Then the following is true:
\begin{enumerate}
\item $ K(K-1) = \lambda (N-1)$,  \label{difset-eq}  
\item $\lambda \leq K$, \label{difset-lamK}
\item $\vert \widehat{\mChiK}(j) \vert^2 = K-\lambda, \quad \text{for all } j \in \Z_N \backslash \{0\}$, \label{difset-dft}
\item $\widehat{\mChiK}(0)=K$. \label{difset-dft0} 
\end{enumerate}
\end{proposition}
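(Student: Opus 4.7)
The plan is to dispatch the four items essentially by counting, since they are elementary consequences of the definition of a difference set and the definition of the discrete Fourier transform. I would handle (i) and (iv) first, deduce (ii) from (i), and save (iii) for last since it is the only item with any real content.

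For (iv), the value $\widehat{\mChiK}(0)=\sum_{k=0}^{N-1}\mChiK(k)$ is simply the cardinality of $\mK$, which is $K$ by definition. For (i), I would count in two ways the set of ordered pairs $(u_k,u_l)$ with $u_k,u_l\in\mK$ and $u_k\neq u_l$: there are $K(K-1)$ such pairs by a direct count, and on the other hand each of the $N-1$ nonzero residues in $\Z_N$ is realized as a difference $u_k-u_l$ exactly $\lambda$ times by the defining property of a difference set, giving $\lambda(N-1)$. Equating these yields $K(K-1)=\lambda(N-1)$. Then (ii) follows immediately from (i): since $\mK\subset\Z_N$ we have $K\le N$, hence $K-1\le N-1$, hence $\lambda=K(K-1)/(N-1)\le K$.

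The only item requiring a computation is (iii). I would expand the square of the modulus using the definition of the DFT,
\begin{equation*}
|\widehat{\mChiK}(j)|^2 \;=\; \widehat{\mChiK}(j)\,\overline{\widehat{\mChiK}(j)} \;=\; \sum_{k,l=0}^{N-1}\mChiK(k)\mChiK(l)\,e^{-\frac{2\pi i j(k-l)}{N}} \;=\; \sum_{k,l\in\mK}e^{-\frac{2\pi i j(k-l)}{N}},
\end{equation*}
then split the double sum into the diagonal $k=l$ (contributing $K$) and the off-diagonal $k\neq l$. On the off-diagonal, I would reindex by the difference $d=k-l\in\Z_N\setminus\{0\}$; the defining property of $\mK$ says that each such $d$ arises exactly $\lambda$ times, so the off-diagonal contribution equals $\lambda\sum_{d=1}^{N-1}e^{-\frac{2\pi i jd}{N}}$. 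For $j\in\Z_N\setminus\{0\}$ the complete geometric sum $\sum_{d=0}^{N-1}e^{-2\pi i jd/N}$ vanishes, so $\sum_{d=1}^{N-1}e^{-2\pi i jd/N}=-1$, giving $|\widehat{\mChiK}(j)|^2=K-\lambda$.

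I do not anticipate any real obstacle: everything is bookkeeping, and the only place one must be mildly careful is in matching the off-diagonal pairs in the DFT calculation with the multiplicity statement in the definition of a difference set. In particular, it is the use of \emph{ordered} differences (which is consistent both with the counting for (i) and with the range of the double sum in (iii)) that makes the numbers match without an extra factor of $2$.
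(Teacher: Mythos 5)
Your proposal is correct and follows essentially the same route as the paper: the two-way count of ordered pairs for (i), the deduction of (ii) from (i) via $K\le N$, the evaluation $\widehat{\mChiK}(0)=K$, and for (iii) the same diagonal/off-diagonal split of the double sum followed by the vanishing geometric sum. The closing remark about ordered differences is a sensible point of care, consistent with the paper's implicit convention.
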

\begin{proof}
The claim in \ref{difset-eq} comes just from a counting argument: On the one hand, there exist $K(K-1)$ differences in total, and on the other hand, there are $N-1$ numbers that need to appear $\lambda$ times.

Once we have this, for \ref{difset-lamK} we need to check that $\frac{K(K-1)}{N-1} \leq K.$ This inequality is
equivalent to $K(K-1)\leq K(N-1),$ which is always true since $K\leq N.$

Finally, for the Fourier transform, we evaluate
\begin{align*}
\vert \widehat{\mChiK}(j) \vert^2 &= \widehat{\mChiK}(j) \overline{\widehat{\mChiK}(j)} =
                                     \sum_{k, k'\in \mK} e^{\frac{-2\pi i kj}{N}}  e^{\frac{2\pi i k'j}{N}} =
                                     \sum_{k,k'\in \mK} e^{\frac{-2\pi i (k-k')j}{N}} =
                                     \sum_{k \in \mK} 1 +  \sum_{\substack{k,k'\in \mK,\\ k\neq k'}}e^{\frac{-2\pi i (k-k')j}{N}} \\
                                  &= K + \lambda \sum_{\ell=1}^{N-1} e^{\frac{-2\pi i j \ell}{N}} = K +
                                                                  \lambda \left( \sum_{\ell=0}^{N-1} e^{\frac{-2\pi i j \ell}{N}}-1  \right) =
                                      K - \lambda, \text{ when } j\neq 0.
\end{align*}
For $j=0,$ we have $\widehat{\mChiK}(0)=K,$ proving \ref{difset-dft0}.
\end{proof}

Let $\mK$ be a difference set with parameters $(N,K,\lambda)$ and consider the normalized  vector $v := \frac{\mChiK}{\|\mChiK\|}=\frac{\mChiK}{\sqrt{K}} \in \C^N$.
We will denote by $\PhiK$ the Gabor system  generated by $v$,
\begin{equation} \label{eq:def-phi}
\PhiK =\Phi_{v}= \{ M_jT_k v\}_{j,k=0}^{N-1}.
\end{equation}
For short,  we will call $\PhiK$ the Gabor system generated by $\mK$.

Consider the $N \times N^2$ matrix whose columns are the elements of the Gabor system \eqref{eq:def-phi}. We also denote this matrix by $\PhiK$. Further, we write $\PhiK$ as a block matrix,
\begin{equation} \label{eq:phi-block}
 \PhiK = \begin{bmatrix}
           B_0 & B_1 & \ldots & B_{N-1}
          \end{bmatrix},
\end{equation}
where each $B_k$ is a square submatrix of size $N\times N$ with columns of fixed translation, i.e.,
$$B_k = \begin{bmatrix} M_0 T_kv & M_1 T_k v & M_2 T_kv & \ldots & M_{N-1}T_kv \end{bmatrix}.$$

\begin{example}
Let $N=7,$ and let $\omega = e^{2\pi i/7}.$ If we consider the difference set from Example \ref{ex:dfs}, the corresponding matrix $\PhiK,$ with not normalized columns for simplicity, will have the form
$$
\left[
 \begin{array}{ccccccc|ccccccc|c|ccccccc}
  0 & 0       &  0       &     0    &     0       &  0     &    0      & 0 & 0       &  0       &     0    &     0       &  0     &    0  &      &                       1  & 1     &  1  & 1  &  1     & 1 & 1  \\
  1 & \omega  &  \omega^2  & \omega^3  &  \omega^4     & \omega^5 & \omega^6     & 0 & 0       &  0       &     0    &     0       &  0     &    0 &       &             1   & \omega  &  \omega^2  & \omega^3  &  \omega^4     & \omega^5 & \omega^6 \\
  1 & \omega^2 & \omega^4  & \omega^6  &  \omega     & \omega^3 & \omega^5    &	1 & \omega^2 & \omega^4  & \omega^6  &  \omega     & \omega^3 & \omega^5 &       &  0  & 0        &   0    &     0    &     0       &  0     &    0  \\
  0 & 0        &   0    &     0    &     0       &  0     &    0              & 1 &\omega^3 & \omega^6  & \omega^2  &  \omega^5     & \omega & \omega^4 & \, \ldots\,  & 1  & \omega^3  &  \omega^6  & \omega^2  &  \omega^5     & \omega & \omega^4 \\
  1 & \omega^4 & \omega  & \omega^5  &  \omega^2     & \omega^6 & \omega^3    & 0 & 0       &  0       &     0    &     0       &  0     &    0 &        & 0 & 0        &   0    &     0    &     0       &  0     &    0   \\
  0 & 0        &     0   &     0    &     0       &  0     &    0             &	1 & \omega^5 & \omega^3  & \omega  &  \omega^6     & \omega^4 & \omega^2 &       & 0 & 0        &   0    &     0    &     0       &  0     &    0  \\
  0 & 0        &    0    &     0    &     0       &  0     &    0       &	0 & 0       &  0       &     0    &     0       &  0     &    0 &        & 0 & 0        &   0    &     0    &     0       &  0     &    0  
 \end{array}
 \right].
$$ 

\end{example}

Going back to the general construction $\PhiK$ and the investigation of its coherence, we note that the Gram matrix of $\PhiK,$ which is defined as $G = \PhiK^*\PhiK,$ is closely related to the mutual coherence. Namely,
\begin{equation}\label{eq:coherence-gram}
 \mu(\Phi) = \max_{i \neq j} \left| G(i,j) \right|.
\end{equation}

For our Gabor system, using the notation from \eqref{eq:phi-block}, the Gram matrix can be written in the block form
\begin{equation}
 G = \begin{bmatrix}
      B_0^* \\ B_1^* \\ \cdots \\ B_{N-1}^*
     \end{bmatrix}
     \begin{bmatrix}
      B_0 & B_1 & \cdots & B_{N-1}
     \end{bmatrix}
      =\begin{bmatrix}
        B_0^*B_0 & B_0^*B_1 & \cdots & B_0^*B_{N-1} \\
        B_1^*B_0 & B_1^*B_1 & \cdots & B_1^*B_{N-1} \\
                 &          &  \cdots &              \\
        B_{N-1}^*B_0 & B_{N-1}^*B_1 & \cdots & B_{N-1}^*B_{N-1}
       \end{bmatrix}.
\end{equation}
We will next state a property of the diagonal blocks in $G,$ which will later turn out to be useful.
 \begin{proposition}\label{prop-BoBo}
Under the notation given above, we have that 
$$\vert B_k^*B_k(j,\ell) \vert =  \begin{cases}
  \sqrt{\frac{N-K}{K(N-1)}},& \textrm{ if } j\neq \ell,\\
 1,& \textrm{ if } j=\ell,
 \end{cases} $$
for all  $\, k,j, \ell=0, \ldots, N-1$.
In particular, the diagonal blocks $B_0^*B_0, \, B_1^*B_1,\, \ldots, \, B_{N-1}^*B_{N-1}$ are all equal in absolute value. 
\end{proposition}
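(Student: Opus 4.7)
The plan is to directly compute the $(j,\ell)$-entry of $B_k^*B_k$ as an inner product and reduce it to a value of the Fourier transform of $\chi_\mK$, for which Proposition~\ref{properties-K} already gives all the information we need.

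First I would write
\[
 B_k^*B_k(j,\ell) \;=\; \langle M_\ell T_k v,\, M_j T_k v\rangle \;=\; \sum_{n=0}^{N-1} e^{\frac{2\pi i(\ell-j)n}{N}}\, |v(n-k)|^2,
\]
since the modulus $|v(n-k)|^2$ does not depend on the modulation phases. Making the substitution $m=n-k$ (which is well defined in $\Z_N$), the exponential factors as $e^{2\pi i(\ell-j)k/N}$ times a sum over $m$, and because $v=\chi_\mK/\sqrt{K}$ takes only the values $1/\sqrt{K}$ and $0$, the remaining sum is $\frac{1}{K}\sum_{m\in\mK} e^{2\pi i(\ell-j)m/N}=\frac{1}{K}\,\widehat{\mChiK}(j-\ell)$. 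Thus
\[
 |B_k^*B_k(j,\ell)| \;=\; \tfrac{1}{K}\,|\widehat{\mChiK}(j-\ell)|,
\]
which is independent of $k$, already yielding the ``in particular'' part of the statement.

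Next I would invoke Proposition~\ref{properties-K}: the diagonal case $j=\ell$ uses $\widehat{\mChiK}(0)=K$ and gives $|B_k^*B_k(j,j)|=1$; for $j\neq \ell$, item (iii) gives $|\widehat{\mChiK}(j-\ell)|^2=K-\lambda$. Finally, using $K(K-1)=\lambda(N-1)$ from item (i), I would rewrite
\[
 K-\lambda \;=\; K-\frac{K(K-1)}{N-1} \;=\; \frac{K(N-K)}{N-1},
\]
so that $\tfrac{1}{K}\sqrt{K-\lambda}=\sqrt{\tfrac{N-K}{K(N-1)}}$, matching the claimed formula.

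There is no real obstacle here; the only step requiring a bit of care is keeping track that the $k$-dependent phase factor $e^{2\pi i(\ell-j)k/N}$ drops out in absolute value, which is exactly what makes the diagonal blocks equal in modulus. Every ingredient beyond that is an immediate application of results already established in Proposition~\ref{properties-K}.
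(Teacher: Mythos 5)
Your computation is correct and follows essentially the same route as the paper: both reduce $|B_k^*B_k(j,\ell)|$ to $\tfrac{1}{K}|\widehat{\mChiK}(j-\ell)|$ (the paper via the commutation relation $M_\ell T_k = e^{-2\pi i k\ell/N}T_k M_\ell$ and unitarity of $T_k$, you via the equivalent index substitution $m=n-k$ inside the sum) and then apply Proposition~\ref{properties-K}. The algebra converting $\tfrac{1}{K}\sqrt{K-\lambda}$ into $\sqrt{\tfrac{N-K}{K(N-1)}}$ checks out.
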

 \begin{proof}
We will first prove that any entry of the blocks $B_k^*B_k,$ $k=1,\ldots,N-1$ is equal in absolute value to the corresponding one in the first block $B_0^*B_0.$
Let $k$ be some element from $\{1,2,\ldots,N-1\}.$ Using the definition of $B_k$ and the basic properties of translation and modulation operators, we have
$$\vert B_k^*B_k(j,\ell) \vert =    \vert  \langle  M_\ell T_k v, M_j T_k  v \rangle \vert  = \vert  \langle e^{\frac{-2\pi i k\ell}{N}}T_k M_\ell v, e^{\frac{-2\pi i kj}{N}} T_k M_j v \rangle \vert  = \vert \langle M_\ell v, M_j v \rangle \vert =\vert B_0^*B_0(j,\ell) \vert,$$
for all $j,\ell =0,1,\ldots,N-1.$
Now, according to the definition of $B_0$ and Proposition \ref{properties-K} \ref{difset-dft}-\ref{difset-dft0},
\begin{equation*}
 \vert B_0^*B_0(j,\ell) \vert = \frac{1}{K} \left| \sum_{k\in \mK} e^{\frac{2\pi i (\ell-j)k}{N}} \right| = \frac{1}{K} \left| \widehat{\mChiK} (j-\ell) \right| = 
 \begin{cases}
 \frac{1}{K} \sqrt{K-\lambda} = \sqrt{\frac{N-K}{K(N-1)}}& \textrm{ if } j\neq \ell,\\
 1& \textrm{ if } j=\ell.
 \end{cases}
\end{equation*}
\end{proof}
\begin{remark} \label{rem:known}
Note that, for each $k=0, \ldots, N-1$,  Proposition \ref{prop-BoBo} says that the collection of $N$ vectors $\{M_0 T_kv,  M_1 T_k v,  \ldots, M_{N-1}T_kv\}$ which spans a $K$-dimensional subspace of $\C^N$, has coherence achieving the Welch bound. Therefore, by \cite[Theorem 2.3]{strohmer2003grassmannian},  $\{M_0 T_kv,  M_1 T_k v,  \ldots, M_{N-1}T_kv\}$ is an ETF for the subspace it spans, for  every  $k=0, \ldots, N-1$ and it has frame bound $\frac{N}{K}.$ 
Note, that this result was proven in \cite[Theorem 1]{xia2005achieving}, where equiangular tight frames are called maximum-Welch-bound-equality (MWBE) codebooks.
It is unclear at this point, however, what the absolute values of the entries in the off diagonal blocks are. As we will see in the next theorem, they will depend on the value of $\lambda,$ and thus the mutual coherence of $\PhiK$ will depend on the 
parameters of the difference set $\mK.$
\end{remark}
\begin{theorem} \label{th:mu} Let $\PhiK$ be a Gabor system generated by an  $(N,K,\lambda)$ difference set $\mK.$ Then,
 \begin{align*}
  \mu(\PhiK) &=  \begin{cases}
      \hfill \sqrt{\frac{N-K}{K(N-1)}},    \hfill & \text{if } \lambda=1,\\
      \hfill \max \{\frac{K-1}{N-1}, \sqrt{\frac{N-K}{K(N-1)}}\},   \hfill & \text{if } \,\lambda \neq 1.
      \end{cases}\\
  \end{align*}
 \end{theorem}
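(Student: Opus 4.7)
The plan is to exploit the block decomposition $\PhiK=[B_0\ B_1\ \cdots\ B_{N-1}]$ and the identity $\mu(\PhiK)=\max_{i\neq j}|G(i,j)|$ from \eqref{eq:coherence-gram}. Because the Gram matrix is block-structured, I will compute $\max|B_k^*B_{k'}(j,\ell)|$ for both $k=k'$ and $k\neq k'$ separately and then take the overall maximum.

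For the diagonal blocks, Proposition \ref{prop-BoBo} already gives $|B_k^*B_k(j,\ell)|=\sqrt{\frac{N-K}{K(N-1)}}$ for $j\neq\ell$, so the contribution from diagonal blocks to $\mu(\PhiK)$ is exactly $\sqrt{\frac{N-K}{K(N-1)}}$. Handling the off-diagonal blocks is the main task. I would expand, for fixed $k\neq k'$,
\begin{equation*}
B_k^*B_{k'}(j,\ell)=\langle M_\ell T_{k'}v,M_jT_kv\rangle=\frac{1}{K}\sum_{n=0}^{N-1}e^{\frac{2\pi i(\ell-j)n}{N}}\mChiK(n-k')\mChiK(n-k),
\end{equation*}
and observe that the product of characteristic functions is the indicator of $(\mK+k)\cap(\mK+k')$. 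The key observation is that $n\in(\mK+k)\cap(\mK+k')$ corresponds to a representation $n-k, n-k'\in\mK$ with difference $k'-k\not\equiv 0\pmod N$, so by the defining property of the $(N,K,\lambda)$ difference set the intersection has exactly $\lambda$ elements.

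Thus $B_k^*B_{k'}(j,\ell)$ is $\frac{1}{K}$ times a sum of $\lambda$ unimodular terms, yielding $|B_k^*B_{k'}(j,\ell)|\leq\frac{\lambda}{K}=\frac{K-1}{N-1}$ by the triangle inequality (using Proposition \ref{properties-K}\ref{difset-eq}). Equality is reached when all phases align, which occurs at $\ell=j$, where the sum equals $\lambda$. Hence $\max_{k\neq k'}\max_{j,\ell}|B_k^*B_{k'}(j,\ell)|=\frac{K-1}{N-1}$, and therefore $\mu(\PhiK)=\max\bigl\{\tfrac{K-1}{N-1},\sqrt{\tfrac{N-K}{K(N-1)}}\bigr\}$ in general.

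Finally, to obtain the stated dichotomy, I would verify that when $\lambda=1$ the Welch-type quantity dominates: squaring and using $K(K-1)=N-1$, comparing $\bigl(\tfrac{K-1}{N-1}\bigr)^2=\tfrac{1}{K^2}$ against $\tfrac{N-K}{K(N-1)}=\tfrac{N-K}{K^2(K-1)}$ reduces to the inequality $K-1\leq N-K$, which holds whenever $\lambda=1$ since then $N=K(K-1)+1\geq 2K-1$ for $K\geq 2$. The main obstacle I anticipate is the index bookkeeping when translating the inner product into a sum over $(\mK+k)\cap(\mK+k')$; the rest follows from the counting property of the difference set and the triangle inequality.
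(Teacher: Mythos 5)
Your proposal is correct and follows essentially the same route as the paper's proof: block decomposition of the Gram matrix, Proposition \ref{prop-BoBo} for the diagonal blocks, identification of the off-diagonal entries as $\frac{1}{K}$ times a sum of exactly $\lambda$ unimodular terms (maximized at $j=\ell$), and the identity $K(K-1)=\lambda(N-1)$. The only difference is cosmetic: you first establish the unified formula $\mu(\PhiK)=\max\bigl\{\tfrac{K-1}{N-1},\sqrt{\tfrac{N-K}{K(N-1)}}\bigr\}$ and then check via $(K-1)(K-2)\geq 0$ that the square-root term dominates when $\lambda=1$, a verification the paper asserts without spelling out.
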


\begin{proof}
According to the described block structure of $\PhiK$ and by \eqref{eq:coherence-gram},  the mutual coherence is
\begin{align*}
\mu(\PhiK) &= 
                \max \{ \max_{\substack{r\neq q \\ j,\ell}} \vert B_r^*B_q(j,\ell) \vert,\,  \max_{\substack{j\neq \ell}} \vert B_0^*B_0(j,\ell)\vert \}.
\end{align*}
We have already investigated the diagonal blocks in Proposition \ref{prop-BoBo}. Next we write explicitly the elements of the Gram matrix $G$ in the off-diagonal blocks as

\begin{align} \label{eq:brbq}
      \vert B_r^*B_q(j,\ell) \vert &= \vert \langle M_\ell T_q v, M_j T_r v \rangle \vert = \vert \langle M_\ell v, T_{(r-q)}M_j v \rangle \vert \nonumber \\
       &=\left| \sum_{k=0}^{N-1} v(k) e^{\frac{2\pi i k \ell}{N}}\overline{ v(k-(r-q))} e^{\frac{-2\pi i (k-(r-q))j}{N}}  \right|
      =  \frac{1}{K} \left| \sum_{\substack{k \in \mK\\ k-(r-q)\in \mK}} e^{\frac{2\pi i (k \ell+(r-q)j-kj)}{N}}\right| \nonumber \\
      &=\frac{1}{K} \left|  \sum_{\substack{k \in \mK\\ k-(r-q)\in \mK}} e^{\frac{2\pi i (\ell-j)k}{N}}\right| .
\end{align}
We can simplify this expression further dependent on the properties of the difference set $\mK.$ We thus consider two separate cases.

\textit{Case $\lambda=1$.}
In the final sum \eqref{eq:brbq}, in the case $q \neq r,$ since $\lambda=1,$ there can be only one $k\in \mK,$ such that $k$ and $k-(r-q)$ are both in $\mK$. This is because there is only one way to write $r-q$ as a difference of elements in $\mK,$ and $k-(k-(r-q))$ is such a difference. Thus, we can continue \eqref{eq:brbq} to obtain
\begin{equation*}
 \vert B_r^*B_q(j,\ell) \vert = \frac{1}{K} \left| e^{\frac{2\pi i (\ell-j)k}{N}} \right| = \frac{1}{K}, \quad \text{when } q\neq r.
\end{equation*}
Further,  by Proposition \ref{prop-BoBo}, $\vert B_0^*B_0(j,\ell) \vert =  \sqrt{\frac{N-K}{K(N-1)}}$. Therefore, when $\lambda=1,$ 
$\mu(\Phi)=\max\{\frac{1}{K}, \sqrt{\frac{N-K}{K(N-1)}}\}=\sqrt{\frac{N-K}{K(N-1)}}.$

\textit{Case $\lambda\neq 1$.}
We will estimate $\max_{r\neq q, j,\ell} \vert B_r^*B_q(j,\ell) \vert$. For fixed $r\neq q$, since $\mK$ is a $(N,K, \lambda)$ difference set we have that $\{k \in \mK: k-(r-q)\in \mK\}$ is a set of exactly $\lambda$ elements. Then, 
from \eqref{eq:brbq} it follows that for all $j,l=0, \ldots, N-1,$
$$\vert B_r^*B_q(j,\ell) \vert\leq \frac{\lambda}{K}.$$
Note that when $j=\ell$, also by \eqref{eq:brbq},  $\vert B_r^*B_q(j,j) \vert =\frac{\lambda}{K}$.  Thus $\max_{r\neq q, j,\ell} \vert B_r^*B_q(j,\ell) \vert=\frac{\lambda}{K}$. Now we just use the fact that $K(K-1)=\lambda(N-1)$ to rewrite $\frac{\lambda}{K}$ as $\frac{K-1}{N-1}.$
\end{proof} 
 
\begin{remark}
Although the value $\sqrt{\frac{N-K}{K(N-1)}}$ was optimal for the case of $N$ vectors in $K$ dimensional space, 
for the full Gabor frame $\PhiK$ the optimal Welch bound will be different. Namely, for a system of $N^2$ vectors in $N$ dimensional 
space, the Welch bound is
$$\mu^* = \sqrt{\frac{N^2-N}{N(N^2-1)}} = \sqrt{\frac{1}{N+1}}.$$
Below we present a table with several families of difference sets and the mutual coherence of the corresponding Gabor systems. For more details on the construction of these difference sets see \cite{xia2005achieving}.
\renewcommand{\arraystretch}{3}
$$
\begin{array}{|l|c|c|c|}
\hline
 \text{Family} & (N,K,\lambda) & \mu(\PhiK)^2 & \mu^{*2}  \\ \hline  
 \text{Singer, $d=2$} & \left(q^2+q+1, q+1, 1 \right) & \dfrac{q}{(q+1)^2}&  \dfrac{1}{q^2+q+2} \\ \hline
 \text{Singer, $d>2$} & \left(\dfrac{q^{d+1}-1}{q-1}, \dfrac{q^d-1}{q-1}, \dfrac{q^{d-1}-1}{q-1}\right) & \dfrac{(q^d-q)^2}{q^2(q^d-1)^2}& \dfrac{q-1}{q^{d+1}+q-2}  \\ \hline
 \text{Quadratic, $q >7 $} & \left(q, \dfrac{q-1}{2}, \dfrac{q-3}{4}\right)  & \dfrac{(q-3)^2}{4(q-1)^2} &  \dfrac{1}{q+1} \\ \hline
 \text{Quartic, $p<57$} & \left(p, \dfrac{p-1}{4}, \dfrac{p+3}{16}\right)  & \dfrac{3p+1}{(p-1)^2} & \dfrac{1}{p+1} \\ \hline
 \text{Quartic, $p>57$} & \left(p, \dfrac{p-1}{4}, \dfrac{p+3}{16}\right)  &  \dfrac{(p-5)^2}{16(p-1)^2} & \dfrac{1}{p+1} \\ \hline
\end{array}
$$
\renewcommand{\arraystretch}{1}
From this table it can be seen that the mutual coherence is not as close to the optimal bound, as it was established, for example, for the Alltop vectors in  \cite{strohmer2003grassmannian}. It is still going asymptotically to zero as the dimension grows for the Singer family, and, as we will see in the numerical experiments, the performance of the difference sets and the Alltop vectors for the sparse recovery problem are almost identical, 
making our construction still interesting for applications.

Reaching the Welch bound is important not only for signal processing, but actually for many other fields, including quantum mechanics, where ETFs of $N^2$ elements in dimension $N$ are known as SIC-POVMs (symmetric informationally complete positive-operator valued measure). It is in fact an open problem whether they exist for every dimension (Zauner conjecture \cite{zauner1999quantendesigns}). Particular examples are also difficult to construct, and known only for certain values of $N.$

From our reasoning above, one might conclude that it is possible to get a Gabor ETF by choosing a difference set with optimal values of the parameters $K$ and $N,$ such that $\mu(\Phi) = \mu^*.$ Constructing a difference set with prescribed parameters is however itself a very difficult and open problem in combinatorial design theory. It is also directly connected to the optimal Grassmannian packing problem \cite{conway1996packing}. Up to now only constructions with certain pairs of parameters $(N,K)$ are known. In any case, we will show that, unfortunately, combinations of parameters of difference sets such that corresponding Gabor system achieves the Welch bound can not exist. Such hope was  probably too good to be true, since for illustration, for $N=17$ an analytical example of a generator which gives an ETF of $N^2$ lines in dimension $N$ was provided in \cite{chien2015equiangular}, but it took over 40 pages to write its expression down.
\end{remark}

\begin{proposition}
Let $N>3.$ Then, there can not exist an  $(N, K, \lambda)$ difference set such that the corresponding Gabor system $\PhiK$ 
will form an equiangular tight frame.
\end{proposition}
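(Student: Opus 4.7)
The plan is to exploit the block structure of the Gram matrix $\PhiK^* \PhiK$ developed in Theorem \ref{th:mu} and Proposition \ref{prop-BoBo} to extract two independent necessary conditions on the parameters of $\mK$, whose only simultaneous solution is $N = 3$.

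First I would reduce the ETF hypothesis to a concrete statement about individual Gram entries. Since $v = \mChiK/\sqrt{K}$ is a unit vector, $\PhiK$ is automatically a unit-norm $N$-tight frame of $M = N^2$ vectors in $\C^N$. A standard consequence of the Welch inequality (sum of squared off-diagonal moduli is a fixed quantity determined by the tight frame bound, so equality in the pointwise bound $|\langle\phi_i,\phi_j\rangle|\le\mu^*$ is forced when $\mu(\PhiK)=\mu^*$) is that a unit-norm tight frame is an ETF if and only if every off-diagonal Gram entry has modulus exactly $\mu^* = 1/\sqrt{N+1}$.

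Next I would read off two constraints from the block decomposition. By Proposition \ref{prop-BoBo}, every off-diagonal entry in each diagonal block $B_k^* B_k$ has modulus $\sqrt{(N-K)/(K(N-1))}$, so equating this to $\mu^*$ and clearing denominators gives $K = (N+1)/2$. On the other hand, specializing formula \eqref{eq:brbq} to $j = \ell$, and using that for $r \neq q$ the set $\{k \in \mK : k-(r-q) \in \mK\}$ has exactly $\lambda$ elements, yields $|B_r^* B_q(j,j)| = \lambda/K$ whenever $r \neq q$. By Proposition \ref{properties-K}, $\lambda/K = (K-1)/(N-1)$, which after substituting $K = (N+1)/2$ simplifies to $1/2$. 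Setting $1/2 = \mu^* = 1/\sqrt{N+1}$ forces $N = 3$, contradicting the hypothesis $N > 3$.

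The main step is conceptual rather than computational: equiangularity forces \emph{two} distinct off-diagonal Gram moduli arising naturally from the block structure (one from the off-diagonal entries inside the diagonal blocks, one from the diagonal entries inside the off-diagonal blocks) to simultaneously equal the Welch bound. Once this is identified, the two resulting algebraic constraints are rigid enough to determine $K$ and then $N$ uniquely, producing the obstruction for all $N > 3$.
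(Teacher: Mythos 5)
Your proof is correct, and it takes a slightly different logical route from the paper's, although the two arguments reduce to the same arithmetic. The paper works from Theorem \ref{th:mu}: the mutual coherence equals $\max\bigl\{\sqrt{\tfrac{N-K}{K(N-1)}},\ \tfrac{K-1}{N-1}\bigr\}$, and an ETF would have to realize the Welch bound with this maximum. That forces a two-case analysis: set each candidate value equal to $1/\sqrt{N+1}$, solve for $K$, and then verify that for that $K$ the \emph{other} quantity is in fact the larger of the two, so the coherence overshoots the bound. You instead invoke the full strength of equiangularity: in a unit-norm tight ETF \emph{every} off-diagonal Gram modulus, not just the largest, must equal $\mu^*=1/\sqrt{N+1}$. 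Since the block structure exhibits two distinct off-diagonal moduli --- $\sqrt{\tfrac{N-K}{K(N-1)}}$ inside the diagonal blocks (Proposition \ref{prop-BoBo}) and $\lambda/K=\tfrac{K-1}{N-1}$ on the diagonals of the off-diagonal blocks (equation \eqref{eq:brbq} with $j=\ell$) --- you obtain two simultaneous equations: the first gives $K=(N+1)/2$, and substitution into the second gives $1/2=1/\sqrt{N+1}$, i.e.\ $N=3$. This eliminates the case analysis and the need to compare the two quantities against each other; the paper's version, in exchange, identifies what the mutual coherence actually is for the candidate parameter sets (namely $1/2$). Your two equations are exactly the computation in the paper's first case, so the gain is structural economy rather than new content, but the argument is complete and valid as stated.
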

\begin{proof}
By Theorem \ref{th:mu}, the mutual coherence can take only one of the two possible values:
$$\sqrt{\frac{N-K}{K(N-1)}} \text{ or } \frac{K-1}{N-1}.$$
We will now consider these two cases separately. 

Let us first assume that $\mu(\Phi) = \sqrt{\frac{N-K}{K(N-1)}}.$ If we want to reach the Welch bound, we need to solve $\frac{N-K}{K(N-1)}=\frac{1}{N+1},$ which implies $K=\frac{N+1}{2}.$ From Proposition \ref{properties-K}, we know that the corresponding $\lambda$ in this case is $\frac{N+1}{4}.$ 
However, for this set of parameters $\left(N,\frac{N+1}{2}, \frac{N+1}{4}\right)$, when $N>3,$ it is easy to check that the mutual coherence is actually $\max \left\{ \sqrt{\frac{N-K}{K(N-1)}}, \frac{K-1}{N-1} \right\}  
= \frac{K-1}{N-1} = \frac{1}{2},$ and thus far from the Welch bound $\mu^* =  \sqrt{\frac{1}{N+1}}$. 
It is interesting to note that, when $N=3,$ potential difference sets with parameters $(3,2,1)$ will achieve the Welch bound. An example for such a difference set is $\mK = \left\{ 0,1 \right\}.$ Its characteristic function $\tilde{g}=\begin{bmatrix} 1 & 1 & 0 \end{bmatrix}$ (which is a member of a continuous family of generators presented in \cite{chien2015equiangular}) forms a Gabor frame of $9$ elements which is an ETF.

Let us next see what happens if we want to reach the Welch bound with the other value, i.e. to have $\left(\frac{K-1}{N-1}\right)^2 = \frac{1}{N+1}.$ For positive $K$ this equation is solved by $K = \frac{N+1+\sqrt{N^3-N^2-N+1}}{N+1}.$ But for such $K$ and $N>3,$ the mutual coherence will actually be $\sqrt{\frac{N-K}{K(N-1)}}$ instead of $\frac{K-1}{N-1}$, and thus again we can not reach the Welch bound. Note that when $N=3,$ $K$ in the obtained solution is again $2.$ Actually,   $\sqrt{\frac{N-K}{K(N-1)}} = \frac{K-1}{N-1} = \frac{1}{\sqrt{N+1}} = \frac{1}{2},$ and thus we again achieve the Welch bound.
\end{proof}

\end{subsection}

\end{section}

\begin{section}{Fusion frames coming from difference sets} \label{sec:ff}
We now move to the second part of the paper, where we aim to investigate our collection of time-frequency shifts of a difference set from a perspective of fusion frames, which are collections of subspaces and generalize the notion of frames. Constructing fusion frames with prescribed ``frame-like'' properties is an important and challenging task. We will show how our Gabor system can be seen as a fusion frame, 
and that it moreover satisfies certain optimality properties which will be discussed further. We now recall the definition of fusion frames \cite{casazza2008fusion}, in our case considered with all weights equal to one.
\begin{definition}
A family of subspaces $\{ \mW_i \}_{i=1}^M$ in  $\C^N$ is called
a \textit{fusion frame} for $\C^N,$ if there exist $A$ and $B,$ $0 < A \leq B < \infty$ such that
\begin{equation*} \label{eq:ff}
 A \Vert x \Vert_2^2 \leq \sum_{i=1}^M \Vert P_i(x) \Vert_2^2 \leq B \Vert x \Vert_2^2 \quad \text{ for all } x \in \C^N,
\end{equation*}
where for each $i=0, \ldots, N-1$, $P_i$ denotes the orthogonal projection of $\C^N$ onto $\mW_i$.
\end{definition}
If $A=B$ is possible, then $\{ \mW_i \}_{i=1}^M$ is called an \textit{$A$-tight fusion frame}. 
Tightness is an important property, required for example, for minimization of the recovery error of a random vector from its noisy fusion frame measurements \cite{kutyniok2009robust}. Among other desirable properties are equidimensionality and equidistance. They provide maximal robustness against erasures of one or more subspaces, and as we will see later, yield optimal Grassmannian packings \cite{kutyniok2009robust}. Equidimensionality means that all the subspaces $\{ \mW_i \}_{i=1}^M$ are of the same dimension, while   to define equidistant fusion frames, we need the notion of chordal distance. 

\begin{definition}
Let $\mW_1$ and $\mW_2$ be subspaces of $\C^N$ with $m:=\dim \mW_1 = \dim \mW_2$ and denote by $P_i$ the orthogonal 
projection onto $\mW_i, \, i=1,2.$ 
The \textit{chordal distance} $d_c(\mW_1,\mW_2)$ between $\mW_1$ and $\mW_2$ is given by
\begin{equation*}
 d_c^2(\mW_1, \mW_2) = m - \Tr[P_1 P_2],
\end{equation*}
where $\Tr$ denotes the trace of an operator. Multiple subspaces are called \textit{equidistant}, if they have pairwise equal chordal distance $d_c.$
 \end{definition}

It was shown in \cite{kutyniok2009robust} that equidistant tight fusion frames are optimal Grassmannian packings, where optimality comes from the classical packing problem:
For given $m,M,N,$ find a set of $m$-dimensional subspaces $\{ \mW_i \}_{i=1}^M$ in $\C^N$ such that $\min_{i\neq j} d_c(\mW_i,\mW_j)$ is as large as possible. In this case
we call $\{ \mW_i \}_{i=1}^M$ an \textit{optimal packing}. An upper bound is given by the \textit{simplex bound}
\begin{equation*} \label{eq:simplexb}
 \frac{m(N-m)M}{N(M-1)}.
\end{equation*}
This is to some extent analogous to the Welch bound from the classical frame theory, and we will 
see that fusion frames generated by difference sets actually achieve the simplex bound.

We will investigate the family of subspaces arising from Gabor system of difference sets, defined as follows.
Let $\mK$ be a difference set with parameters $(N,K,\lambda)$ and let $v =\frac{1}{\sqrt{K}} \mChiK$ be our generator 
for the Gabor system
$$\PhiK = \{  M_j T_i v \}_{j,i=0}^{N-1}.$$
For every $i=0,\ldots,N-1,$ let the subspaces $\mW_i$ be defined as 
\begin{equation} \label{eq:ff-sets}
\mW_i = \Span \{  M_j T_i v \}_{j=0}^{N-1} = \{ x \in \C^N : \supp(x) =  \mK+i \}.
\end{equation}
We call $\mWK = \{ \mW_i \}_{i=1}^N$  \textit{a Gabor fusion frame associated to a difference set} $\mK.$
The fact that this family of subspaces is in fact a fusion frame (and more over tight) will follow from the next proposition.

\begin{proposition}[Corollary 13.2 in \cite{casazza2012finite}] \label{pr:cor132}
Let $\{ \mW_i \}_{i=1}^M$ be a family of subspaces in $\C^N.$ Let $\{\phi_{ij}\}_{j=1}^{J_i}$ be an $A$-tight frame for $\mW_i$ for each $i.$ Then the
following conditions are equivalent.
\begin{enumerate}
 \item $\{ \mW_i \}_{i=1}^M$ is a $C$-tight fusion frame for $\C^N.$  
 \item $\{\phi_{ij}\}_{i=1,j=1}^{M,J_i}$ is an $AC$-tight frame for $\C^N.$
\end{enumerate}
\end{proposition}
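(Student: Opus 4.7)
The plan is to derive a single identity connecting the sums appearing in (i) and (ii), from which the equivalence follows immediately in one line in each direction. The key observation is that each $\phi_{ij}$ lies in $\mW_i$, so inner products of a vector $x\in \C^N$ against the $\phi_{ij}$ only see the orthogonal projection $P_i x$, which in turn lets me apply the local $A$-tight frame property to a vector inside $\mW_i$.

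Concretely, I would fix an arbitrary $x\in \C^N$ and use $\langle x,\phi_{ij}\rangle = \langle P_i x,\phi_{ij}\rangle$ together with the $A$-tightness of $\{\phi_{ij}\}_{j=1}^{J_i}$ on $\mW_i$, applied to $P_i x$, to obtain
$$\sum_{j=1}^{J_i}|\langle x,\phi_{ij}\rangle|^2 \;=\; \sum_{j=1}^{J_i}|\langle P_i x,\phi_{ij}\rangle|^2 \;=\; A\,\|P_i x\|^2.$$
Summing over $i$ yields the identity
$$\sum_{i=1}^{M}\sum_{j=1}^{J_i}|\langle x,\phi_{ij}\rangle|^2 \;=\; A\sum_{i=1}^{M}\|P_i x\|^2,$$
valid for every $x\in \C^N$.

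From this identity, both implications are immediate. If (i) holds, then $\sum_i \|P_i x\|^2 = C\|x\|^2$ for every $x$, and the identity turns this into $\sum_{i,j}|\langle x,\phi_{ij}\rangle|^2 = AC\|x\|^2$, which is (ii). Conversely, if (ii) holds then the left-hand side equals $AC\|x\|^2$; since $A>0$, dividing by $A$ recovers (i). There is no serious obstacle here; the only subtlety worth flagging is that the local tight frame property is defined only for vectors in $\mW_i$, which is exactly why the reduction $\langle x,\phi_{ij}\rangle = \langle P_i x,\phi_{ij}\rangle$ is the essential first step of the argument.
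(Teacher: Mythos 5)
Your argument is correct and is essentially the standard proof of this fact: the paper itself does not prove the proposition but imports it verbatim as Corollary 13.2 of \cite{casazza2012finite}, where the argument is exactly the reduction you give, namely $\langle x,\phi_{ij}\rangle=\langle P_i x,\phi_{ij}\rangle$ followed by the local $A$-tightness applied to $P_i x\in\mW_i$ and summation over $i$. Nothing further is needed.
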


\begin{theorem} \label{th:tightff}
 The family of subspaces $\mWK = \{ \mW_k \}_{k=0}^{N-1}$ defined in \eqref{eq:ff-sets} is a $K$-tight fusion frame.
\end{theorem}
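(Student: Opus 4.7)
The plan is to apply the cited Proposition \ref{pr:cor132} directly: it suffices to identify a common tight frame bound $A$ for the spanning sets within each $\mW_k$, and a tight frame bound $C$ for the full collection, and then the fusion frame bound will be $C/A$.

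For the per-subspace bound, I would invoke Remark \ref{rem:known}, which already records that for each fixed $k$ the set $\{M_0 T_k v, M_1 T_k v, \ldots, M_{N-1} T_k v\}$ is an equiangular tight frame for $\mW_k$ with frame bound $N/K$. Thus Proposition \ref{pr:cor132} applies with $A = N/K$, the same value for every $k$, which is the only nontrivial compatibility condition one needs to check.

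For the global bound, I would use the standard fact recalled in the Background subsection: for any nonzero $g \in \C^N$, the Gabor system $\Phi_g = \{M_j T_k g\}_{j,k=0}^{N-1}$ is an $N\|g\|^2$-tight frame for $\C^N$. Since $v = \mChiK/\sqrt{K}$ is unit-norm, $\PhiK$ is an $N$-tight frame for $\C^N$, giving $C = N$ in the notation of Proposition \ref{pr:cor132}.

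Combining these inputs via the equivalence (i)$\Leftrightarrow$(ii) of Proposition \ref{pr:cor132}, the fusion frame tightness constant is $C/A = N/(N/K) = K$, which is exactly the claim. I do not foresee a genuine obstacle: the whole proof is essentially a bookkeeping exercise that assembles two facts already established earlier in the paper, and the only subtle point is verifying that the tight-frame bound $A = N/K$ is indeed independent of the translation index $k$, which is immediate from Proposition \ref{prop-BoBo} (or from unitarity of the modulation and translation operators).
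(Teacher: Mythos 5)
Your proposal is correct and follows essentially the same route as the paper: invoke Remark \ref{rem:known} to get the common per-subspace tight frame bound $N/K$, use the standard fact that $\PhiK$ is an $N$-tight frame for $\C^N$, and conclude via Proposition \ref{pr:cor132} that the fusion frame bound is $K$. The only cosmetic difference is your relabeling of the constants (you call the global bound $C$ where the proposition writes it as $AC$), which does not affect the argument.
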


\begin{proof}
This property follows directly from Proposition \ref{pr:cor132}. First of all, as noted in Remark \ref{rem:known}, for every fixed $i,$ $\{ M_j T_i v\}_{j=0}^{N-1}$ is a $\frac{N}{K}$-tight (also equiangular) frame for $\mW_i.$ Also, the full system $\{ M_j T_iv\}_{i,j=0}^{N-1}$ is a $N$-tight frame for $\C^N.$  Thus, according to Proposition \ref{pr:cor132}, this is equivalent to $\{ \mW_i \}_{i=0}^{N-1}$ being a $K$-tight fusion frame for $\C^N.$
\end{proof}

\begin{subsection}{Equidistant fusion frames}
We saw that our construction produces tight fusion frames consisting of equi-dimensional subspaces. Next, we will show that they moreover have equal pairwise chordal distance.

\begin{theorem} \label{th:equi-ff}
The Gabor fusion frame $\mWK = \{ \mW_k \}_{k=0}^{N-1}$ associated to an  $(N,K,\lambda)$ difference set $\mK$ is an equidistant fusion frame  with $$d_c^2 =\frac{K(N-K)}{N-1}.$$
\end{theorem}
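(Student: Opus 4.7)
My plan is to compute the chordal distance $d_c(\mW_r,\mW_q)$ directly from the definition, using the simple structure of the subspaces $\mW_k$ as coordinate subspaces. Since $\mW_k = \{x\in\C^N: \supp(x) = \mK+k\}$, the orthogonal projection $P_k$ onto $\mW_k$ is the diagonal matrix whose $(\ell,\ell)$ entry is $1$ if $\ell \in \mK+k$ and $0$ otherwise. This is the key simplification the difference-set structure provides: although the spanning frames $\{M_jT_kv\}_{j=0}^{N-1}$ look complicated, the span they generate is just a coordinate subspace because $v$ is (proportional to) a characteristic function.

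With this, the product $P_rP_q$ is again a diagonal matrix, with ones exactly on the indices $\ell \in (\mK+r)\cap(\mK+q)$ and zeros elsewhere. Consequently
\[
\Tr[P_rP_q] = \bigl|(\mK+r)\cap(\mK+q)\bigr|.
\]
For $r\neq q$, an index $\ell$ lies in this intersection precisely when we can write $\ell = a+r = b+q$ with $a,b\in\mK$, i.e.\ when $a-b = q-r$ in $\Z_N$. Since $q-r \not\equiv 0 \pmod N$ and $\mK$ is an $(N,K,\lambda)$ difference set, the number of ordered pairs $(a,b)\in\mK\times\mK$ with $a-b = q-r$ is exactly $\lambda$, and each such pair determines a unique $\ell$. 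Hence $\Tr[P_rP_q] = \lambda$ for $r\neq q$.

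Plugging into the definition of chordal distance, with $m=K$ for all subspaces, gives $d_c^2(\mW_r,\mW_q) = K-\lambda$ for every $r\neq q$, which already shows the fusion frame is equidistant. To obtain the stated form of the distance, I invoke Proposition \ref{properties-K}\ref{difset-eq}, which rearranges to $\lambda = \frac{K(K-1)}{N-1}$, and then a one-line simplification yields
\[
K-\lambda = \frac{K(N-1)-K(K-1)}{N-1} = \frac{K(N-K)}{N-1}.
\]

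I do not foresee any real obstacle here: the whole argument reduces to the observation that the $\mW_k$ are coordinate subspaces, after which the difference-set axiom immediately controls the overlap $|(\mK+r)\cap(\mK+q)|$. The only point that warrants care is being explicit that $r\neq q$ implies $q-r$ is a nonzero element of $\Z_N$ (so that the ``$\lambda$ times'' count applies), which is built into the indexing $k=0,\dots,N-1$.
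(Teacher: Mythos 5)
Your proof is correct, but it takes a genuinely different and more elementary route than the paper. The paper computes $\Tr[P_{i_1}P_{i_2}]$ by expanding each projection through the tight frame $\{M_jT_iv\}_j$ for $\mW_i$, which leads to a double sum of squared inner products $\vert\langle M_jT_{i_1}v, M_{j'}T_{i_2}v\rangle\vert^2$; these are then evaluated via the Gram-matrix formula \eqref{eq:brbq} and Plancherel's theorem to arrive at $\lambda$. You instead exploit the fact that each $\mW_k$ is a coordinate subspace (which the paper itself records in \eqref{eq:ff-sets}), so that $P_k$ is a diagonal $0$--$1$ matrix and $\Tr[P_rP_q]=\vert(\mK+r)\cap(\mK+q)\vert$; the difference-set axiom then counts this intersection as exactly $\lambda$ via the bijection $\ell\mapsto(\ell-r,\ell-q)$. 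Your argument is shorter and makes transparent that only the combinatorics of $\mK$ matters, at the price of leaning on the identification of $\mW_k$ with the coordinate subspace supported on $\mK+k$ (which is justified since the $N$ modulations of $T_kv$ form a tight frame for that $K$-dimensional space, per Remark \ref{rem:known}); the paper's computation is heavier but stays entirely within the Gabor/Fourier framework it has already set up, reusing \eqref{eq:brbq}. Both yield $\Tr[P_rP_q]=\lambda$ for $r\neq q$ and hence $d_c^2=K-\lambda=\frac{K(N-K)}{N-1}$ by Proposition \ref{properties-K}\ref{difset-eq}.
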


\begin{proof}
Let $\mW_{i_1}$ and $\mW_{i_2}$ be any two different subspaces from \eqref{eq:ff-sets}. 
To compute $d_c^2(\mW_{i_1},\mW_{i_2})$ we require $\Tr[P_{i_1} P_{i_2}] = \sum_{\ell=0}^{N-1} \langle P_{i_2} e_\ell, P_{i_1} e_\ell \rangle,$ where $\{e_\ell\}_{\ell=0}^{N-1}$ is the canonical basis of $\C^N$. For this, first note that 
$$ P_{i_k} \,e_\ell = \frac{K}{N} \sum_{j=0}^{N-1} \langle e_\ell, T_{i_k} M_j v \rangle T_{i_k} M_j v =  \frac{K}{N} \sum_{j=0}^{N-1} \overline{T_{i_k} M_j v(\ell)}\, T_{i_k} M_j v, \quad k=1,2.$$
This leads to
\begin{align} \label{eq:distsum}
\sum_{\ell=0}^{N-1} \langle P_{i_2} e_\ell, P_{i_1} e_\ell \rangle 
         &=  \frac{K^2}{N^2} \sum_{\ell=0}^{N-1} \langle \sum_{j=0}^{N-1} \overline{ M_j T_{i_2} v(\ell)}\,  M_j T_{i_2} v, \sum_{j'=0}^{N-1} \overline{ M_{j'}T_{i_1}v(\ell)}\, M_{j'} T_{i_1} v \rangle \nonumber \\
         &=  \frac{K^2}{N^2} \sum_{\ell=0}^{N-1} \sum_{j=0}^{N-1} \sum_{j'=0}^{N-1}\overline{ M_jT_{i_2} v(\ell)} M_{j'}T_{i_1} v(\ell) \langle  M_j T_{i_2} v,  M_{j'}T_{i_1} v \rangle \nonumber \\
         & = \frac{K^2}{N^2} \sum_{j,j'=0}^{N-1} \langle M_{j'} T_{i_1}  v, M_{j}  T_{i_2} v \rangle \langle  M_{j}T_{i_2} v,  M_{j'} T_{i_1} v \rangle 
          = \frac{K^2}{N^2} \sum_{j,j'=0}^{N-1} \vert  \langle  M_jT_{i_1} v,  M_{j'} T_{i_2}v \rangle \vert^2 \nonumber \\
        &  \overset{\eqref{eq:brbq}}{=} \frac{1}{N^2} \sum_{j,j'=0}^{N-1} \left| \sum_{\substack{k \in \mK\\ k-(i_2-i_1)\in \mK}} e^{\frac{2\pi i (j'-j)k}{N}}  \right|^2
          = \frac{1}{N^2} \sum_{j,j'=0}^{N-1} \vert \hat{\chi}_{\mK_{i_1-i_2}}(j-j') \vert^2,
\end{align}
where $\mK_{i_1-i_2} = \{ k \in \mK : k-(i_1-i_2) \in \mK\}.$ As we have noted before, $\card(\mK_{i_1-i_2}) = \lambda.$ For fixed $j,$ by Plancherel's Theorem, we have 
$$\sum_{j'=0}^{N-1} \vert \hat{\chi}_{\mK_{i_1-i_2}}(j-j') \vert^2 = \Vert T_j \hat{\chi}_{\mK_{i_1-i_2}} \Vert^2 =  \Vert \hat{\chi}_{\mK_{i_1-i_2}} \Vert^2 
  = N \Vert \chi_{\mK_{i_1-i_2}} \Vert^2 = N \lambda.$$
Now we can go back to the sum \eqref{eq:distsum}, and get the final result,
\begin{equation*}
 \sum_{\ell=0}^{N-1} \langle P_{i_2} e_\ell, P_{i_1} e_\ell \rangle  =   \frac{1}{N^2} \sum_{j=0}^{N-1}  N \lambda = \lambda.
\end{equation*}
Notice that this value does not depend on the choice of the subspaces. Thus, taking into account that our subspaces have dimension $K,$ by definition of chordal distance we obtain
$$d_c^2 = K - \Tr[P_1 P_2] = K - \lambda.$$
Finally, by Proposition \ref{properties-K} \ref{difset-lamK}, the claim follows.
\end{proof}

\begin{corollary}
The Gabor fusion frame $\mWK = \{ \mW_k \}_{k=0}^{N-1}$ associated to an  $(N,K,\lambda)$ difference set $\mK$ is an optimal Grassmannian packing of $N$ $K$-dimensional subspaces in $\C^N.$
\end{corollary}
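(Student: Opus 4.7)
The plan is to combine the two results just established, namely that $\mWK$ is a tight fusion frame (Theorem \ref{th:tightff}) and that it is equidistant with explicit chordal distance (Theorem \ref{th:equi-ff}), with the fact recalled in the exposition preceding the construction: from \cite{kutyniok2009robust}, any equidistant tight fusion frame attains the simplex bound, and hence yields an optimal Grassmannian packing. So the entire argument reduces to verifying that the parameters line up and that the simplex bound numerically equals $d_c^2 = K(N-K)/(N-1)$.

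Concretely, I would first record the relevant parameters: we have $M = N$ subspaces, each of dimension $m = K$, sitting inside $\C^N$. The simplex bound for these parameters reads
\begin{equation*}
\frac{m(N-m)M}{N(M-1)} \;=\; \frac{K(N-K)\,N}{N\,(N-1)} \;=\; \frac{K(N-K)}{N-1}.
\end{equation*}
This is exactly the common squared chordal distance computed in Theorem \ref{th:equi-ff}. Thus $\min_{i \neq j} d_c^2(\mW_i,\mW_j) = d_c^2$ saturates the simplex bound, which by definition means $\mWK$ is an optimal packing of $N$ subspaces of dimension $K$ in $\C^N$.

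I expect the only thing worth being careful about is confirming that Theorem \ref{th:tightff} (tightness) and Theorem \ref{th:equi-ff} (equidistance with that precise value) together suffice to invoke the optimality criterion of \cite{kutyniok2009robust}; once the hypotheses are met, the conclusion is immediate. There is no genuine obstacle here — the work was already done in the preceding two theorems, and the corollary is really a one-line verification that the computed $d_c^2$ equals the simplex bound.
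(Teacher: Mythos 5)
Your proposal is correct and follows essentially the same route as the paper: both invoke the tightness from Theorem \ref{th:tightff} and the equidistance from Theorem \ref{th:equi-ff}, appeal to the criterion of \cite{kutyniok2009robust} relating tight equidistant fusion frames to the simplex bound, and verify that $\frac{K(N-K)N}{N(N-1)} = \frac{K(N-K)}{N-1}$ equals the computed $d_c^2$. No issues.
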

\begin{proof}
By Theorem \ref{th:equi-ff}, $\mWK$ is a fusion frame of equidimensional subspaces with pairwise equal chordal distances $d_c.$ It was proven in \cite[Theorem 4.3]{kutyniok2009robust}, that in this case the fusion frame is tight, if and only if $d_c^2$ equals the simplex bound. We already know from Theorem \ref{th:tightff} that our fusion frame is tight, hence the claim follows. We can also check that the simplex bound is achieved. For this set of parameters, the simplex bound equals
$$ \frac{K(N-K)N}{N(N-1)} = \frac{K(N-K)}{N-1},$$
and this is exactly $d_c^2.$ Thus, we have an optimal packing.
\end{proof}

\end{subsection}

\begin{subsection}{Optimally sparse fusion frames}
The notion of optimally sparse fusion frames was introduced in \cite{casazza2011optimally} and means that all subspaces can be seen as spans of orthonormal basis vectors that are sparse in a uniform basis over all subspaces, and thus only few entries are present in the decomposition. 
This  different optimality property is of great practical use when the fusion frame dimensions are large, and low-complexity fusion frame decomposition is desirable.
We will show that our Gabor fusion frames defined in \eqref{eq:ff-sets} are also optimally sparse.

\begin{definition} \cite{casazza2011optimally} \label{def:sparseff} Let $\{ \mW_i \}_{i=1}^M$ be a fusion frame for $\C^N$ with $\dim\mW_i=m_i$ for all
$i=1,\ldots,M$ and let $\{v_j\}_{j=1}^N$ be an orthonormal basis for $\C^N.$ If for each $i\in \{1,\ldots,M\},$ there
exists an orthonormal basis $\{\phi_{i,\ell}\}_{\ell=1}^{m_i}$ for $\mW_i$ with the property that for each $\ell=1,\ldots,m_i$
there exists a subset $J_{i,\ell} \subset \{1,\ldots,N\}$ such that
$$ \phi_{i,\ell} \in \Span \{ v_j : j \in J_{i,\ell} \} \, \text{ and } \, \sum_{i=1}^M \sum_{\ell=1}^{m_i} \vert J_{i,\ell} \vert = k,$$
we refer to $\{ \phi_{i,\ell}\}_{i=1,\ell=1}^{M,m_i}$ as an \textit{associated $k$-sparse frame.} The fusion frame
$\{ \mW_i \}_{i=1}^M$ is called \textit{$k$-sparse} with respect to $\{v_j\}_{j=1}^N,$ if it has an associated $k$-sparse frame
and if, for any associated $j$-sparse frame, we have $k \leq j.$
\end{definition}

\begin{definition} \cite{casazza2011optimally}
Let $\mF\mF$ be a class of fusion frames for $\C^N,$ let $\{ \mW_i \}_{i=1}^M \in \mF\mF,$  and let $\{v_j\}_{j=1}^N$ 
be an orthonormal basis for $\C^N.$ Then $\{ \mW_i \}_{i=1}^M$ is called \textit{optimally sparse in $\mF\mF$ with
respect to} $\{v_j\}_{j=1}^N,$ if $\{ \mW_i \}_{i=1}^M$ is $k_1$-sparse with respect to $\{v_j\}_{j=1}^N$ and there does
not exist a fusion frame $\{\mV_i\}_{i=1}^M \in \mF\mF$ which is $k_2$-sparse with respect to $\{v_j\}_{j=1}^N$ with $k_2 < k_1.$
\end{definition}
Let $\mF\mF(M,m,N)$ be the class of tight fusion frames in $\C^N$ which have $M$ subspaces, each of dimension $m.$
One example of optimally sparse fusion frames in this class is the spectral tetris construction (STFF), explained in more details
in \cite{casazza2011optimally} and \cite[Chapter~13]{casazza2012finite}. For this fusion frame the following theorem is known.

\begin{theorem}  \cite{casazza2011optimally} \label{thm:tetris}
Let $N,M,$ and $m$ be positive integers such that $\frac{Mm}{N} \geq 2$ and $\lfloor \frac{Mm}{N} \rfloor \leq M-3.$
 Then the tight fusion frame STFF $(M,m,N)$ is optimally sparse in the class $\mF\mF(M,m,N)$ with respect 
 to the canonical basis in $\C^N$. 
 
In particular, this tight fusion frame is $mM+2(N-\gcd(Mm,N))$-sparse with respect to the canonical basis.
\end{theorem}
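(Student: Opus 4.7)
The plan is to prove the two parts separately: first, that the Spectral Tetris construction STFF$(M,m,N)$ achieves sparsity exactly $mM+2(N-\gcd(Mm,N))$ with respect to the canonical basis, and second, that no tight fusion frame in $\mF\mF(M,m,N)$ can do better. The structural fact I would exploit throughout is that any tight fusion frame of tightness $A=Mm/N$ corresponds to an $N\times Mm$ matrix whose columns (obtained by concatenating orthonormal bases of the subspaces) form an $(Mm/N)$-tight frame for $\C^N$, so the sparsity in Definition \ref{def:sparseff} is simply the total number of nonzero entries in this matrix, and the tightness becomes the per-row identity $\sum_{\ell}|M(k,\ell)|^2=Mm/N$ for every row index $k$.

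For the upper bound I would write down the Spectral Tetris Construction explicitly, building the $N\times Mm$ matrix row by row. Place a $\pm 1$ in a column whenever the running sum of squared entries in the current row can reach the next integer without exceeding $Mm/N$; otherwise insert a $2\times 2$ rotation block $\bigl(\begin{smallmatrix}a & -b\\ b & a\end{smallmatrix}\bigr)$ straddling two adjacent rows, with $a^2+b^2=1$ chosen to fill exactly the remaining mass in the row above. Each isolated $\pm 1$ corresponds to a basis vector of support $1$ (sparsity $+1$), while each rotation block corresponds to two basis vectors of support $2$ each (sparsity $+4$, i.e.\ $+2$ over what two standard basis vectors would have cost). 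A careful bookkeeping shows that the number of rotation blocks forced by the integer-crossings of the running row sum is exactly $N-\gcd(Mm,N)$, yielding total sparsity $mM+2(N-\gcd(Mm,N))$. The hypotheses $Mm/N\geq 2$ and $\lfloor Mm/N\rfloor\leq M-3$ are precisely what guarantee this recursive packing closes up into $M$ subspaces of dimension $m$.

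For the lower bound I would fix an arbitrary $\{\mW_i\}_{i=1}^M\in\mF\mF(M,m,N)$ together with any orthonormal bases $\{\phi_{i,\ell}\}_{\ell=1}^m$ of $\mW_i$, and for each $k\in\{1,\ldots,N\}$ set $r_k=|\{(i,\ell):\phi_{i,\ell}(k)\neq 0\}|$, so that the total sparsity equals $\sum_k r_k$. The tightness identity gives $\sum_{(i,\ell)}|\phi_{i,\ell}(k)|^2=Mm/N$ for every $k$. Entries of modulus $1$ can only come from basis vectors of support $1$ and contribute exactly $1$ per entry to their row sum; every other nonzero entry comes from a basis vector of support $\geq 2$ and contributes strictly less than $1$. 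A modular counting argument across all rows then shows that, unless $N\mid Mm$, the resulting ``fractional deficit'' cannot be absorbed without introducing at least $N-\gcd(Mm,N)$ pairs of support-$2$ basis vectors that must cohabit, each such pair costing $+2$ in sparsity beyond the baseline $mM$ obtained if every basis vector had support $1$. This yields $\sum_k r_k\geq mM+2(N-\gcd(Mm,N))$ and matches the upper bound.

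The main obstacle is the sharpness of the lower bound. The upper bound reduces to a routine counting exercise once the STFF recipe is made explicit, but ruling out every conceivable cross-subspace sparsification that might save below the claimed bound requires a genuinely global combinatorial argument. I expect the sharp constant $\gcd(Mm,N)$ to emerge from tracking the partial sums of squared row entries modulo $1$ and counting the number of integer-crossings, in the spirit of a discrepancy lemma; packaging this so that it matches what the STFF attains, rather than merely yielding some weaker lower bound, is where I anticipate the bulk of the work to lie.
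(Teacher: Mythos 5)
First, a point of comparison you could not have known: the paper does not prove this statement at all. Theorem \ref{thm:tetris} is quoted from \cite{casazza2011optimally} and used as a black box, so there is no in-paper proof to measure your attempt against. Judged on its own terms, your sketch correctly identifies the two halves (an upper bound realized by the Spectral Tetris construction, and a matching lower bound over all of $\mF\mF(M,m,N)$), and the upper-bound bookkeeping ($+1$ per isolated unimodular entry, $+4$ per $2\times 2$ block, $N-\gcd(Mm,N)$ blocks) is the right count. But there are two gaps, one of them fatal as written.

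The fatal one is the lower bound. You only ever invoke the diagonal of $TT^*$ (the per-row identity $\sum_{i,\ell}|\phi_{i,\ell}(k)|^2=Mm/N$) together with unit column norms. These constraints alone cannot force sparsity at least $Mm+2(N-\gcd(Mm,N))$: the natural argument they support is to decompose the synthesis matrix into connected row/column blocks, observe that a block with row set $R$ and column set $C$ satisfies $|C|=|R|\,Mm/N\in\N$, hence $|R|$ is a multiple of $N/\gcd(Mm,N)$ and there are at most $\gcd(Mm,N)$ blocks, and that a connected block needs at least $|R|+|C|-1$ nonzeros. That yields only $Mm+N-\gcd(Mm,N)$, strictly weaker than the claim whenever $N\nmid Mm$. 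The missing factor of $2$ comes from the \emph{off-diagonal} orthogonality of the rows of the synthesis operator ($TT^*=(Mm/N)I_N$, not merely a prescribed diagonal): two rows that overlap in exactly one column cannot be orthogonal, so any two overlapping rows must overlap in at least two columns, and connecting the $|R|$ rows of a block therefore costs at least $2(|R|-1)$ nonzeros beyond the $|C|$ baseline. Your ``modular counting argument'' never uses row orthogonality, so it cannot close; the claimed count of ``at least $N-\gcd(Mm,N)$ cohabiting support-$2$ pairs'' is asserted rather than derived. The second, smaller gap is in the upper bound: Spectral Tetris produces a sparse tight frame of $Mm$ unit vectors, but to obtain a member of $\mF\mF(M,m,N)$ you must still partition these columns into $M$ groups of $m$ mutually orthogonal vectors spanning the subspaces; this partition step is where the hypothesis $\lfloor Mm/N\rfloor\leq M-3$ is actually used, and your sketch assumes it rather than performs it.
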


We will now show that the Gabor fusion frames generated by difference sets are also optimally sparse in the corresponding class of tight fusion frames.
\begin{theorem}
Let $\mWK=\{ \mW_k \}_{k=0}^{N-1}$ be the Gabor fusion frame associated with a difference set $\mK$  with parameters $(N,K,\lambda).$ 
Then, $\mWK$ is an optimally sparse fusion frame in the class $\mF\mF(N,K,N)$ with respect to the canonical basis with sparsity $KN.$
\end{theorem}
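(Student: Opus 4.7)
The key observation is that each subspace $\mW_i$ from \eqref{eq:ff-sets} is, by definition, precisely the set of vectors supported on $\mK+i$, so it has the orthonormal basis $\{e_j : j \in \mK+i\}$ consisting of canonical basis vectors. My plan is to use this extremely ``canonical'' description to pin down the sparsity count from above and then establish a matching lower bound that applies to every member of $\mF\mF(N,K,N)$.

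First I would show that $\mWK$ admits a $KN$-sparse associated frame. For each $i\in\{0,\ldots,N-1\}$ and each $\ell\in\{1,\ldots,K\}$, pick $\phi_{i,\ell}=e_{u_\ell+i}$ where $\mK=\{u_1,\ldots,u_K\}$. Then $\{\phi_{i,\ell}\}_{\ell=1}^{K}$ is an orthonormal basis for $\mW_i$, and taking $J_{i,\ell}=\{u_\ell+i\}$ gives $|J_{i,\ell}|=1$, so the total count is
\begin{equation*}
\sum_{i=0}^{N-1}\sum_{\ell=1}^{K}|J_{i,\ell}| = NK.
\end{equation*}

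Next I would observe a universal lower bound: for any fusion frame $\{\mV_i\}_{i=1}^{N}\in\mF\mF(N,K,N)$ and any associated sparse frame $\{\phi_{i,\ell}\}$, each $\phi_{i,\ell}$ is nonzero and hence satisfies $|J_{i,\ell}|\geq 1$, so
\begin{equation*}
\sum_{i=1}^{N}\sum_{\ell=1}^{K}|J_{i,\ell}| \geq NK.
\end{equation*}
This simultaneously shows that $\mWK$ itself cannot admit any associated sparse frame with count smaller than $NK$ (so it is genuinely $NK$-sparse in the sense of Definition \ref{def:sparseff}), and that no other element of $\mF\mF(N,K,N)$ can beat this bound. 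Combining the two steps yields optimal sparsity with sparsity exactly $NK$.

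There is no real obstacle here: the argument is essentially a one-line counting bound, and the role of the difference set is simply to guarantee that $\mWK$ actually lies in $\mF\mF(N,K,N)$ (tightness with equidimensional subspaces), which was already established in Theorem \ref{th:tightff}. The only mild subtlety worth a sentence in the write-up is to verify that $\{e_{u_\ell+i}\}_{\ell=1}^{K}$ is indeed an orthonormal basis of $\mW_i$, which is immediate from the support characterization in \eqref{eq:ff-sets}.
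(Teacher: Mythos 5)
Your proposal is correct and follows essentially the same route as the paper's proof: exhibit the canonical-basis orthonormal bases $\{e_{u_\ell+i}\}_{\ell=1}^{K}$ for each $\mW_i$ to get an associated $KN$-sparse frame, then note the trivial lower bound $\vert J_{i,\ell}\vert\geq 1$ forces every associated sparse frame of every member of $\mF\mF(N,K,N)$ to have count at least $KN$. The paper likewise invokes Theorem \ref{th:tightff} to place $\mWK$ in the class $\mF\mF(N,K,N)$, so nothing is missing.
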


\begin{proof}
From Theorem \ref{th:tightff}, we know that $\mWK$ is a tight fusion frame from the class $\mF\mF(N,K,N).$

Let $\{ e_j \}_{j=1}^N$ be the canonical basis of $\C^N$. From the definition of $\mWK$ \eqref{eq:ff-sets} it follows that the elements of each subspace $\mW_i$ are supported on the sets $\mK+i.$ Therefore, as an orthonormal basis for every $\mW_i$  we can take
$$\{ \phi_{i,\ell} \}_{\ell=1}^K, \text{ where } \phi_{i,\ell} = e_{k_\ell+i}, k_\ell \in \mK.$$
Then, the corresponding sets $J_{i,\ell}$ from Definition \ref{def:sparseff} are each of cardinality $1,$ and the sparsity of $\mWK$ is
$$ \sum_{i=1}^N \sum_{\ell=1}^{K} \vert J_{i,\ell} \vert = KN.$$
Now, for any other associated $j$-sparse frame with sets $\{\tilde{J}_{i,\ell}\}_{i=1, \ell=1}^{N, K}$, we have that $ \sum_{i=1}^N \sum_{\ell=1}^{K} \vert \tilde{J}_{i,\ell} \vert\geq KN$ because each $ \tilde{J}_{i,\ell}$ has at least one element. Thus, $\mWK$ is $KN$-sparse. Moreover, this also says that $KN$ is the smallest sparsity that one can expect in  $\mF\mF(N,K,N)$. Therefore, $\mWK$ is optimally sparse.  
\end{proof}

\begin{remark}
 For $K \geq 2,$ $K \leq N-3,$ we have by Theorem \ref{thm:tetris} that STFF $(N,K,N)$ is optimally sparse in $\mF\mF(N,K,N)$. Note that in this case the sparsity given by Theorem \ref{thm:tetris}, $KN+2(N-\gcd(NK,N))$ is exactly $KN$.
\end{remark}

\end{subsection}

\end{section}

 \begin{section}{Numerical experiments} \label{sec:num}
In this section we test our two constructions based on difference sets: Gabor frames and Gabor fusion frames, on recovery of sparse signals from given Gabor-like measurements. 
\subsection{Classical sparse recovery} 
We aim to recover an unknown sparse (having small number of nonzero entries) vector $x \in \C^{N^2}$ from its linear measurements $y=\Phi_g x,$ where $\Phi_g$ is the $N\times N^2$ Gabor system generated by $g \in \C^N$. 
Recovery of sparse signals from linear measurements is the classical compressed sensing setup \cite{candes2006stable}. To recover $x$ we will traditionally use Basis Pursuit (BP) \cite{chen1998atomic}, which is the convex problem given by
\begin{equation*} \label{eq:l1}
 \min \Vert x \Vert_1 \text{ subject to } \Phi_g\, x = y.
\end{equation*}
We solve this problem with CVX, a package for specifying and solving convex programs \cite{cvx}.
We want to compare the results of recovery of sparse vectors using three different types of generators for $\Phi_g:$ Alltop vectors \cite{alltop1980complex}, complex random vectors and difference sets:
\begin{itemize}
 \item[$g_A:$] Alltop vector. $g_A(j) = \frac{1}{\sqrt{N}} e^{2\pi i j^3 / N},$ for prime $N \geq 5.$
 \item[$g_R:$] Random vector. $g_R(j) = \frac{1}{\sqrt{N}} \epsilon_j,$ where $\epsilon_j$ are independent and uniformly distributed on the torus.
 \item[$g_K:$] Difference set. $g_K = \frac{1}{\sqrt{K}} \mChiK$ for some $(N,K,\lambda)$ difference set $\mK.$
 \end{itemize}
We have chosen Alltop and random generators, since their Gabor frames have already proven to be successful for sparse recovery both theoretically and numerically \cite{pfander2008identification}. The theoretical guarantees come from the near optimality of the mutual coherence of these Gabor systems. We would like to see how the difference sets compare to their performance, despite their theoretically non-optimal coherence.
 
In the numerical experiment in Figure \ref{fig:1}, we have chosen $N=43$ (a prime which gives $3$ modulo $4,$ suitable for difference sets of the Quadratic family that we will use). For fixed sparsity level $k$, we generate a random $k$-sparse vector $x \in \C^{N^2},$ with $k$ non-zero values $x(j) = r_j \exp(2\pi i \theta_j),$ where $r_j$ is drawn independently from the standard normal distribution $\mN(0,1),$ and $\theta_j$ is drawn independently and uniformly from $[0,1).$ Then, we measure this signal with each of the three Gabor frames, and try to recover it by BP. We count the recovery as successful, if the normalized squared error was smaller than $10^{-6}.$ 
For every $k$ we repeat this experiment $T=500$ times, and plot the successful recovery rates in Figure \ref{fig:1}. 
What we observe here is that all three generators have almost identical recovery rate. The complex random generator performs the best since we choose a different realization for $g_R$ at every experiment, and the difference sets are slightly better then the Alltop in the transition level of $k.$ 

The fact that the mutual coherence can not always capture the desired properties of the Gabor frame was noted in \cite{bajwa2010gabor}, where \textit{average coherence} was introduced. To guarantee a successful recovery via BP, certain relations between the average and the mutual coherence need to be satisfied. One can show that those particular conditions are also not satisfied by the Gabor frame generated by difference sets. Finding the correct theoretical explanation of this  successful behavior in numerical experiments is an interesting question, and we leave it for future investigation.

\begin{figure}[h!] 
\centering
 \includegraphics[scale=0.48]{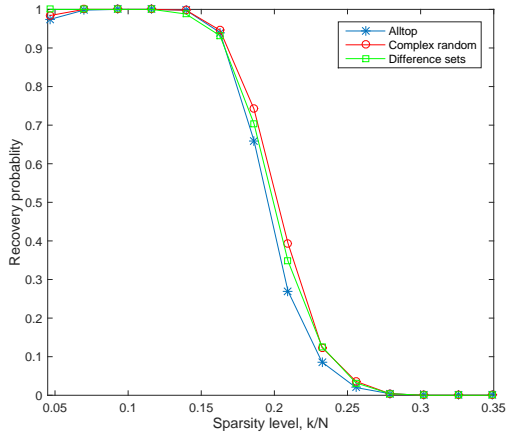}  
 \caption{Sparse recovery from Gabor measurements} \label{fig:1}
\end{figure}

\subsection{Fusion frame sparse recovery}
Here the task is to use Gabor fusion frame $\mWK$ generated by a difference set $\mK$ for recovery of signals which are sparse in a fusion frame, namely, which have nonzero components lying in only few subspaces. In our case that would correspond to having only few translations. A detailed theoretical description of this problem for fusion frames in general, and its importance for applications is given in \cite{boufounos2011sparse}. Investigations on recovery from random fusion frames were conducted in \cite{ayaz2013sparse}.

The problem we are given in short is as follows. Given the condition that $x = \{x_j\}_{j=1}^N, \, x_j \in \mW_j$ has only few nonzero components $x_j,$ recover $x$ from its measurements $y = A_P x,$ where $A_P = \{ a_{ij} P_j \}_{i,j=1}^{n,N}.$ Here, $\{ a_{ij} \}_{i,j=1}^{n,N}$ is the measurement matrix which in our experiment we take to be random Gaussian, and $P_j$ are the projections onto the corresponding subspaces $\mW_j.$
This time $x$ can be found by solving the minimization problem
\begin{equation} \label{eq:ffmin}
\min_{x \in \mH} \Vert x \Vert_{2,1}  \text{ subject to } A_P x = y.
\end{equation}
The norm which we minimize is the mixed $\ell_2 \slash \ell_1$ norm, which promotes ``block'' like sparsity, and is defined as
$$ \Vert x \Vert_{2,1} = \sum_{j=1}^N \Vert x_j \Vert_2, \text{ where }  x = \{x_j\}_{j=1}^N, \, x_j \in \mW_j.$$
Although it might not be clear how to numerically solve the described minimization process \eqref{eq:ffmin}, this can be easily accomplished via the standard $\ell_1$ minimization technique, by incorporating the basis vectors for each of the subspaces. The details can be found in \cite{boufounos2011sparse}.
Here the question is not only what level of sparsity are we able to recover, but also, how many measurements $n$ in the matrix $A_P$  we need. At the same time, as we will see, the dimensions of the subspaces will play an important role.

In Figure \ref{fig:K13}, we consider an  $(N,K,\lambda)$ difference set with $N=40$ and $K=13,$ which means that the dimension of the subspaces is $13,$ and in the experiment depicted in Figure \ref{fig:K21}, we set $N=43$ and $K=21.$ For a different number of measurements, as denoted in the legend, and for every sparsity level, we generate random $k$-fusion sparse vector  $x$ 
(with independent random Gaussian values at $k$ subspaces chosen at random). Then, we calculate the measurements $y,$ and try to recover back $x$ by \eqref{eq:ffmin} again using CVX. We repeat each experiment $T=100$ times, and count the recovery as successful, if the normalized squared error was smaller than $10^{-6}$. The results are presented in Figure \ref{fig:ff}. We observe that as expected, larger number of measurements allows for higher levels of sparsity, but also that when the dimension of the subspaces is smaller, 
fewer measurements are needed to recover the signal. Moreover, if the subspaces are of small dimension, and the number of measurements is sufficiently large, we can recover $x$ independently of its sparsity level.

The theoretical results for this problem, described in \cite{boufounos2011sparse}, are again not sufficient to capture this effect: leaving out the details, the \textit{fusion coherence}, which is defined as 
$$ \mu_f (A_P, \{ \mW_i \}_{i=1}^M )= \max_{j \neq k} \left[ \vert \langle a_j, a_k \rangle \vert \cdot \Vert P_j P_k \Vert_2 \right]$$
and guarantees successful recovery, in our case will be equal to the mutual coherence of the Gaussian measurement matrix, and will not depend on the fusion frame structure, since one can prove that $\Vert P_j P_k \Vert_2$ always equals $1.$ Therefore, a more subtle measure of coherence is still missing in both problems presented, and these questions will be part of future research.

\begin{figure}[h!] 
\centering
\begin{subfigure}{0.49\linewidth} \centering
 \includegraphics[scale=0.48]{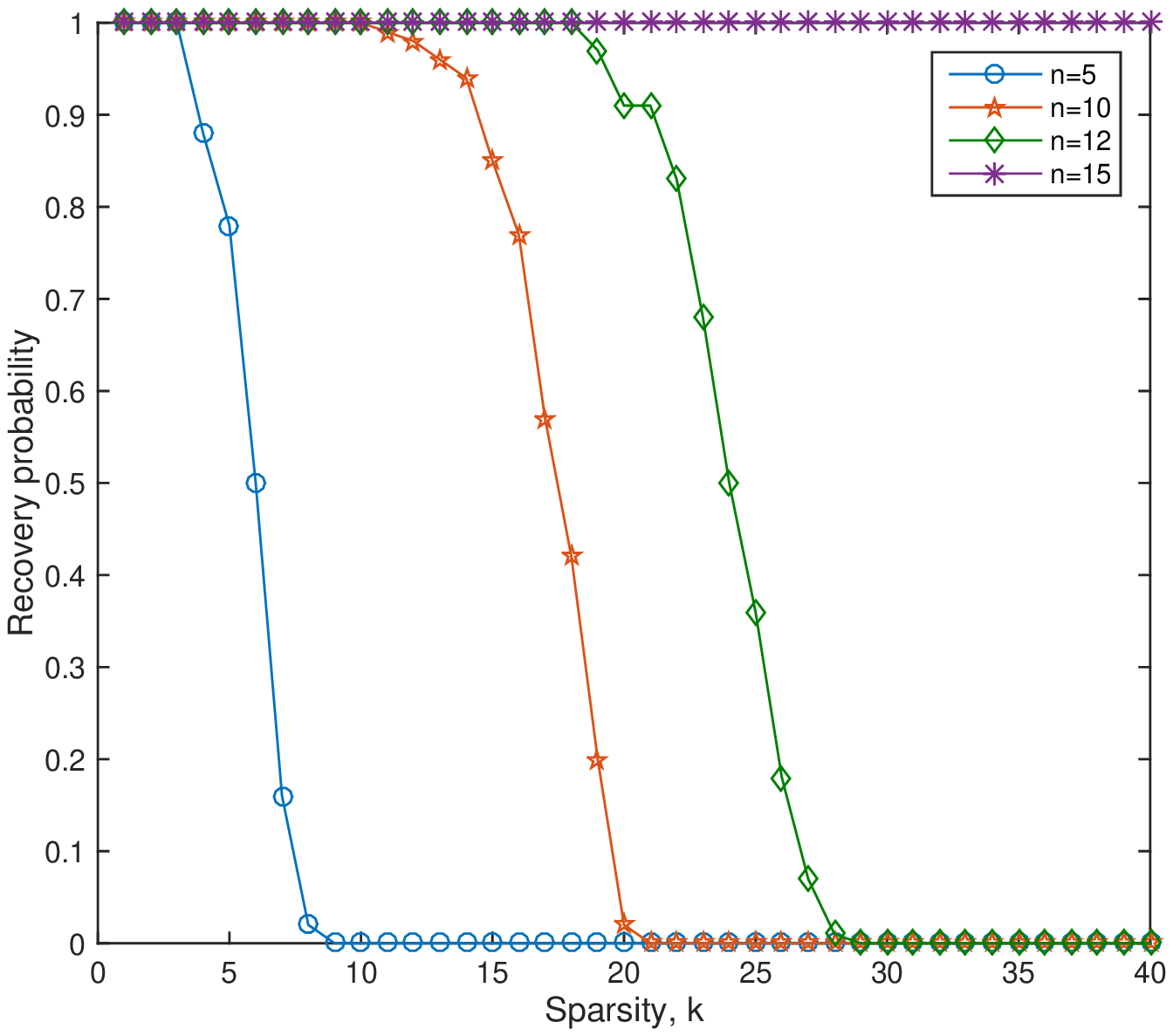}  
 \caption{Difference set with $N=40, K=13$} \label{fig:K13}
 \end{subfigure}
 \begin{subfigure}{0.49\linewidth} \centering
 \includegraphics[scale=0.48]{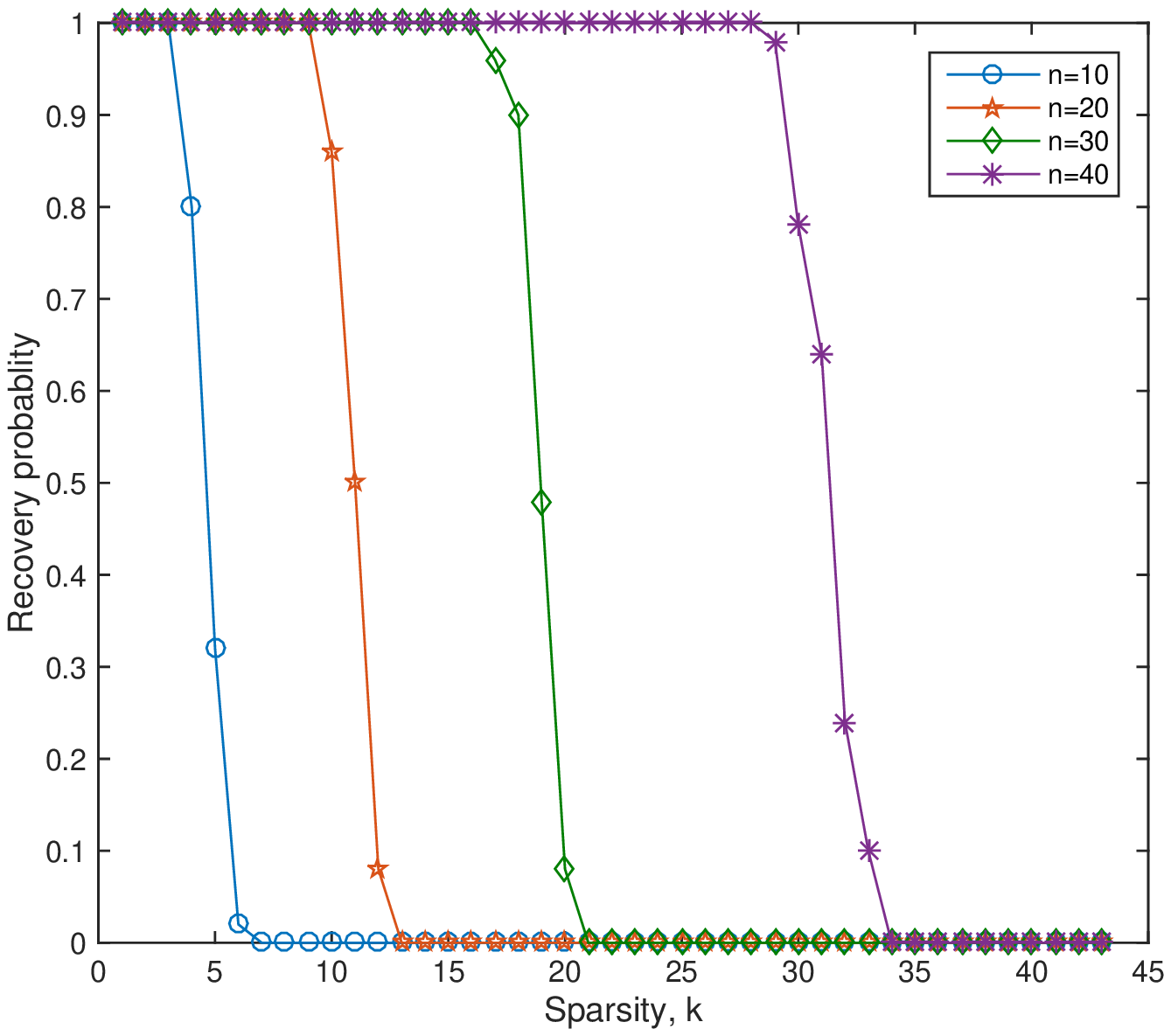}
 \caption{Difference set with $N=43, K=21$}  \label{fig:K21}
 \end{subfigure}
 \caption{Fusion sparse recovery with Gabor fusion measurements} \label{fig:ff}
\end{figure}

 \end{section}

\section*{Acknowledgment}
The authors would like to thank Gitta Kutyniok and Matthew Fickus for fruitful discussions and useful comments that greatly improved the manuscript. 
I.~B. acknowledges support by the Berlin Mathematical School. V.~P. was supported by a fellowship for postdoctoral researchers
from the Alexander von Humboldt Foundation and by Grants UBACyT  2002013010022BA and CONICET-PIP 11220110101018.

\end{document}